\documentclass[twoside, reqno, 12pt]{amsart}

\usepackage[T1]{fontenc}
\usepackage[utf8]{inputenc}
\usepackage{lmodern}
\usepackage{microtype}
\usepackage{amsmath,amssymb,amsthm,mathtools}
\usepackage{geometry}
\usepackage{hyperref}
\usepackage{xcolor}

\hypersetup{colorlinks=true,linkcolor=blue!50!black,citecolor=blue!50!black,urlcolor=blue!50!black}
\allowdisplaybreaks

\makeatletter
\def\thmhead@plain#1#2#3{%
  \thmname{#1}\thmnumber{\@ifnotempty{#1}{ }\@upn{#2}}%
  \thmnote{ \the\thm@notefont(#3)}}
\let\thmhead\thmhead@plain
\makeatother

\newtheorem{theorem}{Theorem}
\newtheorem{corollary}[theorem]{Corollary}
\newtheorem{proposition}[theorem]{Proposition}
\newtheorem{lemma}[theorem]{Lemma}

\theoremstyle{definition}

\theoremstyle{remark}
\newtheorem{remark}[theorem]{Remark}

\DeclareMathOperator{\diam}{diam}

\title[Erd\H{o}s's diameter conjecture for separated distances fails in high dimensions]%
{Erd\H{o}s's diameter conjecture for separated distances fails in high dimensions}
\author{Boon Suan Ho}
\date{}

\begin{document}

\begin{abstract}
Erd\H{o}s asked whether every $n$-point set in Euclidean space whose
$\binom{n}{2}$ pairwise distances are mutually at least $1$ apart must have
diameter at least $(1+o(1))n^2$. We disprove this statement by constructing 
for every prime power $q$ a set
$\mathcal X_q\subset \mathbb R^{q^2+q}$ of $n=q+1$ points such that all
pairwise distances in $\mathcal X_q$ are mutually at least $1$ apart, while
\[
\diam(\mathcal X_q)
\le \Bigl(1-\frac{1}{\pi^2}+o(1)\Bigr)n^2.
\]
The proof is fully formalized in Lean 4.
\end{abstract}
\maketitle
\section{Introduction}
In a chapter of unsolved problems, Erd\H{o}s asked for lower bounds on the
diameter of an $n$-point set in Euclidean space when all pairwise distances are
separated from one another by at least $1$ \cite[Problem 20]{Erdos1997} 
(see also \cite[Problem 670]{Bloom}), and conjectured that
\begin{equation}\label{eq:erdos-conjecture}
\diam(\mathcal C) \ge (1+o(1))n^2,
\end{equation}
where the bound is independent of the dimension. He also gave a proof
in dimension $1$.

To the best of our knowledge, the exact higher-dimensional form of
Erd\H{o}s's question has received relatively little direct attention. The
closest earlier work seems to be Brass's study of the planar
``Erd\H{o}s-diameter'' \cite{Brass1996}, which assumes that distinct positive
distances are separated by at least $1$ and asks for the minimum possible
diameter. There is also a substantial literature on nearly equal distances,
beginning with Erd\H{o}s--Makai--Pach--Spencer \cite{EMPS1991} and
Erd\H{o}s--Makai--Pach \cite{EMP1993}, in which many pairwise distances are
allowed to lie in one or several short intervals. See also 
\cite{FranklKupavskii2023, PRV2006}. These problems are different
from the present one, but they are motivated by the same general question:
how strongly does the spacing of the distance set constrain the geometry of the
underlying point configuration?

The purpose of this note is to show that the conjectured bound
\eqref{eq:erdos-conjecture} is false in general:

\begin{theorem}\label{thm:main}
Let
\[
 c_*\coloneqq1-\frac{1}{\pi^2}=0.89867\dots.
\]
For infinitely many $n$, there exists an $n$-point set
$\mathcal X_n\subset \mathbb R^{n^2-n}$ such that
\begin{equation}\label{eq:sep-condition}
\bigl|\,|x-y|-|x'-y'|\,\bigr|\ge 1
\end{equation}
for every two distinct pairs
$\{x,y\}\neq \{x',y'\}$ from $\mathcal X_n$, while
\[
\diam(\mathcal X_n)\le (c_*+o(1))n^2.
\]
\end{theorem}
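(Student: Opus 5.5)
The plan is to use a Singer difference set to turn the problem into choosing a single even function on the cyclic group $\mathbb Z_m$, $m=q^2+q+1$, and then to pick that function by a Fourier positivity argument. Fix a prime power $q$ and let $S\subset\mathbb Z_m$ be a Singer difference set, $|S|=q+1=n$. Every nonzero residue is then a difference $s-t$ with $s,t\in S$ in exactly one way. The points will be indexed by $S$, and we want $|x_s-x_t|=g(s-t)$ for an even function $g:\mathbb Z_m\setminus\{0\}\to(0,\infty)$. Two distinct unordered pairs $\{s,t\}\ne\{s',t'\}$ give differences $\pm k\neq\pm k'$. So \eqref{eq:sep-condition} reduces to a condition on $g$ alone: $|g(k)-g(k')|\ge1$ whenever $k\ne\pm k'$. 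There are $(m-1)/2\approx n^2/2$ classes $\{\pm k\}$, and the diameter is $\max g$.

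\textbf{Realizability.} A function of the difference is realized by translates in $\mathbb R^{\mathbb Z_m}$. Put $x_s=f(\cdot-s)$ with $f$ real. Then $|x_s-x_t|^2=2(\rho(0)-\rho(s-t))$, where $\rho$ is the autocorrelation of $f$. So it suffices that $G(k):=g(k)^2$, extended by $G(0)=0$, is conditionally negative definite on $\mathbb Z_m$. Concretely, we need $\widehat{G}(j)\le0$ for all $j\ne0$, i.e.\ $G=2(\rho(0)-\rho)$ with $\widehat\rho\ge0$. Discarding the zero frequency and splitting into real parts puts everything in dimension $m-1=q^2+q=n^2-n$, matching the statement.

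\textbf{Choice of $g$.} Order the classes by $|k|\in\{1,\dots,(m-1)/2\}$ and take $g(k)=L+\lambda|k|$ with $\lambda\ge1$, so consecutive values are exactly $\lambda\ge1$ apart. Alternatively take $g(k)=\sqrt{A+B\,\phi(k)}$ for a suitable profile $\phi$, with spacing enforced by a lower bound on $g'$. For $G(k)=(L+|k|)^2$ we have
\[
G(k)=L^2+2L|k|+k^2\qquad(k\neq0).
\]
The jump at $0$ contributes $-L^2/m$ to every nonzero Fourier coefficient, which is strongly negative. The triangle wave $|k|$ and the term $k^2$ on $\mathbb Z_m$ have coefficients of size roughly $-m/(\pi^2j^2)$ and $m^2/(\pi^2 j^2)(-1)^j$ respectively (normalized by $1/m$). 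The dangerous frequencies are small odd/even $j$, where the $k^2$ term has the wrong sign. The task is then to minimize the diameter
\[
L+\frac{m}{2}\approx L+\frac{n^2}{2},
\]
or more generally $\max g$ over admissible profiles, subject to all nonzero $\widehat G(j)\le0$.

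\textbf{Main obstacle.} I expect the hard step to be the sharp optimization giving exactly $c_*=1-1/\pi^2$. With the linear profile, the $j=1$ coefficient should be the binding one; that is where $\pi^2$ (from $\sum 1/j^2$ and the cosine series of a parabola) enters. I would first compute $\widehat G(j)$ in closed form for the family $g=L+\lambda|k|$, and more generally for a concave profile $g(k)=\sqrt{\alpha+\beta k-\gamma k^2}$ whose $G$ is a quadratic in $|k|$. I would check that the worst case is $j=1$, solve for the minimal $L/n^2$, and verify the spacing $g(k+1)-g(k)\ge1$ on the whole range. The error terms are $O(1/m)$ corrections from the discrete Fourier sums versus their continuous limits, which give the $o(1)$. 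The realizing vectors then yield $\mathcal X_q$ with $n=q+1$ for every prime power $q$, hence for infinitely many $n$.
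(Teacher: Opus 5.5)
Your reduction is sound and matches the paper's. With a Singer set, pairs are labelled by cyclic separation, and your translate construction realizes $G=g^2$ (with $G(0)=0$) in $\mathbb R^{m-1}$ exactly when $\widehat G(j)\le 0$ for all $j\ne0$. That is the paper's condition $W_r\ge0$ on a weighted product of regular $m$-gons. The gap is the choice of profile, which is the entire content of the theorem, and neither concrete family you propose works.

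\textbf{The linear profile.} For $G(k)=(L+|k|)^2$ the binding frequency is $j=2$, not $j=1$. At $j=1$ all three terms are negative. At even $j$, however, the cycle distance $|k|$ has coefficient $-1/(4m\cos^2(\pi j/(2m)))$, which is tiny for small $j$, while $\widehat{k^2}(j)=(-1)^j\cos(\pi j/m)/(2\sin^2(\pi j/m))$. Hence
\[
\widehat G(2)=-\frac{L^2}{m}-\frac{L}{2m\cos^2(\pi/m)}+\frac{\cos(2\pi/m)}{2\sin^2(2\pi/m)}\approx-\frac{L^2}{m}+\frac{m^2}{8\pi^2}.
\]
This forces $L\gtrsim m^{3/2}$, so the diameter is of order $n^3$. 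The factor $\lambda$ cannot help, because the condition is homogeneous.

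\textbf{The quadratic profile.} Write $G(k)=\Gamma(2\pi k/m)$ with $\Gamma(\theta)=a+b\theta-c\theta^2$. If $\Gamma'$ vanishes in $(0,\pi]$ (as it does when $c\ge b/(2\pi)$), nearby consecutive distances differ by $O(m^{-2})$, and the rescaled diameter is of order $m^2$. Otherwise $\Gamma$ is increasing, and the smallest gap is at most the last gap $d_N-d_{N-1}$. So the diameter is at least $(1+o(1))\,m\,\Gamma(\pi)/(\pi\Gamma'(\pi))$, and
\[
\frac{\Gamma(\pi)}{\pi\Gamma'(\pi)}=\frac{a+b\pi-c\pi^2}{\pi(b-2c\pi)}\ge 1\quad\Longleftrightarrow\quad a+c\pi^2\ge0 .
\]
This holds trivially for $c\ge0$. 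For $c<0$, the positive even harmonics of $-c\theta^2$ must be cancelled by the jump at $0$, which forces $a\gtrsim |c|m$. So this family gives at best Erd\H{o}s's $(1+o(1))n^2$, attained by $G\propto|k|$. No optimization inside it produces $c_*$. Note also that $c_*$ is not an optimum of anything; the paper's closing remark already improves it.

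\textbf{The missing idea.} Your alternative $g=\sqrt{A+B\phi}$ has the right shape, but the choice of $\phi$ is exactly what is missing. The paper starts from the extremal profile $G\propto|k|$, namely
\[
H_0(\theta)=\pi\theta/4=\sum_{j\text{ odd}}(1-\cos j\theta)/j^2 \quad\text{on } [0,\pi],
\]
which gives constant exactly $1$. It then lowers only the first Fourier weight from $1$ to $7/8$, giving $H(\theta)=\pi\theta/4-\tfrac18(1-\cos\theta)$.
\begin{itemize}
\item All weights stay positive.
\item $H(\pi)$ drops by $1/4$ while $H'(\pi)=\pi/4$ is unchanged, because $\sin\pi=0$. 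By contrast, $-c\theta^2$ also lowers $\Gamma'(\pi)$, and twice as fast relative to $\Gamma(\pi)/\pi$.
\item A direct computation of $2HH''-(H')^2$, using $0<\sin\theta-\theta\cos\theta<\pi$ on $(0,\pi)$, shows $\sqrt H$ is strictly concave. So the last gap is the smallest, and the constant is $4H(\pi)/\pi^2=1-1/\pi^2$.
\end{itemize}

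Finally, setting $W_r=\sum_{j\equiv\pm r}c_j$ aliases the continuous cosine series onto $\mathbb Z/m\mathbb Z$ and gives $d_s=\sqrt{H(2\pi s/m)}$ exactly. So none of the discrete-versus-continuous Fourier error analysis you anticipate is needed; the only $o(1)$ comes from the mean value theorem at $\theta=\pi$.
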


\newpage
The construction is explicit. Let $q$ be a prime power and put
\[
 m=q^2+q+1,
 \qquad n=q+1,
 \qquad N=\frac{m-1}{2}=\binom{n}{2}.
\]
Singer's theorem gives a cyclic $(m,n,1)$-difference set
$D\subset \mathbb Z/m\mathbb Z$ \cite{Singer1938}. Thus the $N$ unordered pairs from $D$
are indexed by the cyclic separations $1,\dots,N$. We then realize a chosen
distance profile $d_1<\dots<d_N$ in $\mathbb R^{2N}$ by placing the points on a
weighted product of regular $m$-gons. The profile is chosen so that the gaps
$d_{s+1}-d_s$ decrease with $s$; then the smallest gap is the last one, and
scaling by its reciprocal makes all pairwise distances at least $1$ apart.

\section{Singer difference sets and cyclic profiles}

We begin with the classical cyclic difference set construction.

\begin{proposition}[Singer \cite{Singer1938}]\label{prop:singer}
Let $q$ be a prime power and let $m=q^2+q+1$. There exists a set
$D\subset \mathbb Z/m\mathbb Z$ with $|D|=q+1$ such that every nonzero residue class in
$\mathbb Z/m\mathbb Z$ has a unique representation in the form $a-b$ with $a,b\in D$.
\end{proposition}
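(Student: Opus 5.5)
The plan is to prove Singer's theorem by the classical projective-geometry construction. Set $F=\mathbb F_q$ and $E=\mathbb F_{q^3}$, so that $E$ is a $3$-dimensional vector space over $F$. The nonzero vectors of $E$ modulo nonzero scalars form the points of the projective plane $\mathrm{PG}(2,q)$. There are $(q^3-1)/(q-1)=q^2+q+1=m$ such points.

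The multiplicative group $E^\times$ is cyclic; fix a generator $g$. Since $F^\times=\langle g^m\rangle$ is the unique subgroup of order $q-1$, the quotient $E^\times/F^\times$ is cyclic of order $m$, generated by the image of $g$. This identifies the points of the plane with $\mathbb Z/m\mathbb Z$ via $g^i F^\times\mapsto i \bmod m$. Multiplication by $g$ is an $F$-linear automorphism of $E$, so it permutes the points. On indices it acts as the shift $i\mapsto i+1$, and it maps lines (the $2$-dimensional $F$-subspaces) to lines.

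Now fix one line $L$, for instance the kernel of the trace map $\mathrm{Tr}_{E/F}$. This is a $2$-dimensional subspace because the trace is a nonzero linear functional. Let
\[
D=\{\, i\in\mathbb Z/m\mathbb Z : g^i\in L\,\}.
\]
Then $|D|=(q^2-1)/(q-1)=q+1$.

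The key step is to show that the $m$ translates $D+t$ are pairwise distinct. Note that $D+t$ corresponds to the line $g^tL$, so this amounts to showing that the stabilizer of $L$ in the cyclic group $\langle g\rangle$ acting on points is trivial. The stabilizer is a subgroup of $\mathbb Z/m\mathbb Z$. Suppose $g^tL=L$ with $t\not\equiv0$. Then $L$ is a module over the subring $F[g^t]\subseteq E$. This subring is a subfield containing $F$, so it is either $F$ or $E$. It is not $F$, because $g^t\notin F^\times$ when $m\nmid t$. Hence it is $E$, and then $L$ would be an $E$-subspace of $E$, forcing $\dim_F L\in\{0,3\}$. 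This contradicts $\dim_F L=2$.

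Since the plane has exactly $m$ lines and we now have $m$ distinct translates, the translates $D+t$ are exactly all the lines. The difference property then follows by counting. Take a nonzero $d$. Representations $d=a-b$ with $a,b\in D$ correspond to points $b$ and $b+d$ lying in $D\cap(D-d)$. Now $D$ and $D-d$ are distinct lines, and two distinct lines of a projective plane meet in exactly one point, since two distinct $2$-dimensional subspaces of a $3$-dimensional space meet in a $1$-dimensional subspace. So $|D\cap (D-d)|=1$, which means $d$ has exactly one representation.

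As a sanity check on the count, $|D|(|D|-1)=q^2+q=m-1$ ordered pairs cover the $m-1$ nonzero residues, so uniqueness is equivalent to existence here.

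The main obstacle is the trivial-stabilizer step, and the subfield argument above handles it. An alternative would be to note that the stabilizer has order dividing $m$ and then count fixed points. I would expect the remaining steps to be routine.
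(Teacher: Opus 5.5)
Your proof is correct, and it is essentially Singer's original argument: the cyclic collineation of $\mathrm{PG}(2,q)$ induced by a generator of $\mathbb F_{q^3}^\times$, with the trivial-stabilizer step handled via the intermediate subfield $F[g^t]$. The paper does not reprove this result and only cites Singer. Two small polish points: a representation $d=a-b$ corresponds to the single point $b\in D\cap(D-d)$, not to ``$b$ and $b+d$'' both lying in that intersection. Also, the step showing that the translates exhaust all $m$ lines is unnecessary, since you only use that $D$ and $D-d$ are distinct lines.
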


Fix such a set $D$, and write $n=q+1$ and $N=(m-1)/2$. For an unordered pair $\{t,u\}\subset \mathbb Z/m\mathbb Z$ with $t\neq u$, its
\emph{cyclic separation} is the unique integer $s\in[N]\coloneqq\{1,\dots,N\}$ such that
$t-u\equiv \pm s \pmod m$.

\begin{lemma}\label{lem:separations}
For each $s\in[N]$, there is exactly one unordered pair
$\{a,b\}\subset D$ such that
\[
a-b\equiv \pm s \pmod m.
\]
Consequently, the $\binom n2=N$ unordered pairs from $D$ are indexed by
$s=1,\dots,N$.
\end{lemma}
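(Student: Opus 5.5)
We argue directly from Proposition~\ref{prop:singer} by comparing ordered and unordered pairs. First we check that the notion of cyclic separation is well defined. Since $m=q^2+q+1$ is odd, $N=(m-1)/2$ is an integer. The residues $\pm1,\dots,\pm N$ are pairwise distinct and together exhaust the nonzero classes of $\mathbb Z/m\mathbb Z$: if $s\equiv -s'$ with $s,s'\in[N]$, then $m\mid s+s'\le m-1$, which is impossible. Hence every nonzero residue $r$ can be written as $\pm s$ for exactly one $s\in[N]$.

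\emph{Existence.} Fix $s\in[N]$. By Proposition~\ref{prop:singer} there are $a,b\in D$ with $a-b\equiv s$. Since $s\not\equiv 0$, we have $a\neq b$, so $\{a,b\}$ is an unordered pair from $D$ with separation $s$.

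\emph{Uniqueness.} Suppose $\{a,b\}$ and $\{a',b'\}$ both have separation $s$. After relabeling each pair, we may assume $a-b\equiv s$ and $a'-b'\equiv s$; this is possible because swapping the two elements of a pair negates the difference. The uniqueness part of Proposition~\ref{prop:singer} then gives $(a,b)=(a',b')$ as ordered pairs, so the two unordered pairs coincide.

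Consequently, the map sending an unordered pair from $D$ to its separation is a bijection onto $[N]$. This also recovers the count: $D$ has $n(n-1)=q^2+q=m-1$ ordered pairs, matching the $m-1$ nonzero residues, so $\binom n2=N$ is consistent. There is no real obstacle; the only point needing care is the well-definedness of the separation, which relies on $m$ being odd.
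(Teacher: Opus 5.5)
Your proof is correct and follows essentially the same route as the paper: both rest on the unique ordered representation in Proposition~\ref{prop:singer}, together with the observation that $s$ and $-s$ correspond to the same unordered pair with the order reversed. The one difference is that you take existence directly from Singer's theorem (and check that the separation is well defined because $m$ is odd), whereas the paper upgrades \emph{at most one} pair per $s$ to \emph{exactly one} using the count $\binom n2=N$.
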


\begin{proof}
By Proposition \ref{prop:singer}, for each nonzero residue $r\in\mathbb Z/m\mathbb Z$,
there is a unique ordered pair $(a,b)\in D\times D$ with $a-b\equiv r\pmod m$.
The residues $s$ and $-s$ correspond to the same unordered pair, with the order
reversed. Thus each $s\in[N]$ determines at most one unordered pair.
Since there are $N$ values of $s$ and exactly $\binom n2=N$ unordered pairs,
each value occurs exactly once.
\end{proof}

\begin{proposition}\label{prop:profile-realization}
Let $W_1,\dots,W_N\ge 0$. For $t\in\mathbb Z/m\mathbb Z$ define
\[
 v_t\coloneqq\Bigl(\sqrt{W_r/2}\cos(2\pi rt/m),\ \sqrt{W_r/2}\sin(2\pi rt/m)\Bigr)_{1\le r\le N}
 \in \mathbb R^{2N}.
\]
If $\{t,u\}$ has cyclic separation $s\in[N]$, then
\begin{equation}\label{eq:distance-formula-general}
\|v_t-v_u\|^2
=\sum_{r=1}^N W_r\Bigl(1-\cos\frac{2\pi rs}{m}\Bigr).
\end{equation}
Hence, the multiset of pairwise distances among $\{v_t:t\in D\}$ is exactly the
collection of values
\[
 d_s\coloneqq\left(\sum_{r=1}^N W_r\Bigl(1-\cos\frac{2\pi rs}{m}\Bigr)\right)^{1/2},
 \qquad s=1,\dots,N,
\]
with each value (not necessarily distinct) occurring once.
\end{proposition}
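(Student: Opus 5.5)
The plan is a direct coordinate computation, followed by an appeal to Lemma \ref{lem:separations}. Since $v_t\in\mathbb R^{2N}$ is a concatenation of $N$ planar blocks, the squared distance splits as a sum over $r$ of the squared distances between the $r$-th blocks. I would compute each block separately and then reduce the dependence on $t-u$ to dependence on the cyclic separation $s$.

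Fix $r$, and write $\theta=2\pi rt/m$ and $\phi=2\pi ru/m$. The $r$-th block of $v_t-v_u$ is $\sqrt{W_r/2}\,(\cos\theta-\cos\phi,\ \sin\theta-\sin\phi)$. Its squared norm is
\[
\frac{W_r}{2}\bigl(2-2\cos\theta\cos\phi-2\sin\theta\sin\phi\bigr)=W_r\bigl(1-\cos(\theta-\phi)\bigr)=W_r\Bigl(1-\cos\frac{2\pi r(t-u)}{m}\Bigr).
\]
Here the coordinates are well defined on $\mathbb Z/m\mathbb Z$ because $\cos$ and $\sin$ of $2\pi rt/m$ depend only on $t \bmod m$. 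Summing over $r=1,\dots,N$ gives $\|v_t-v_u\|^2$ as a function of $t-u$.

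Next, if $\{t,u\}$ has cyclic separation $s$, then $t-u\equiv\pm s\pmod m$. Therefore $r(t-u)\equiv\pm rs\pmod m$, and since cosine is even and $2\pi$-periodic, $\cos(2\pi r(t-u)/m)=\cos(2\pi rs/m)$. This proves \eqref{eq:distance-formula-general}. The right-hand side is nonnegative because $W_r\ge0$, so $d_s$ is well defined as a nonnegative square root and $\|v_t-v_u\|=d_s$.

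For the multiset statement, I apply Lemma \ref{lem:separations}. The unordered pairs from $D$ are in bijection with $s\in[N]$ via cyclic separation, so the pair indexed by $s$ has distance exactly $d_s$. Hence the multiset of pairwise distances among $\{v_t: t\in D\}$ is $\{d_1,\dots,d_N\}$, with each index occurring once. One caveat: the $v_t$ for $t\in D$ might coincide as points if some $d_s=0$, but the statement concerns the multiset indexed by pairs, so this does not matter. There is no real obstacle; the only care needed is the modular well-definedness and the sign reduction from $\pm s$ to $s$.
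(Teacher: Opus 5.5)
Your proposal is correct and takes the same approach as the paper: compute the squared distance block by block via the cosine difference identity, sum over $r$, and invoke Lemma~\ref{lem:separations} for the multiset statement. You also spell out two details the paper leaves implicit, namely well-definedness on $\mathbb Z/m\mathbb Z$ and the reduction from $t-u\equiv\pm s \pmod m$ to $\cos(2\pi rs/m)$ using that cosine is even and $2\pi$-periodic.
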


\begin{proof}
In the $r$th two-dimensional factor, the squared distance between the $t$th and
$u$th points equals
\begin{align*}
&\frac{W_r}{2}\left(\bigl(\cos(2\pi rt/m)-\cos(2\pi ru/m)\bigr)^2
+\bigl(\sin(2\pi rt/m)-\sin(2\pi ru/m)\bigr)^2\right) \\
&\qquad= W_r\Bigl(1-\cos\frac{2\pi r(t-u)}{m}\Bigr).
\end{align*}
Summing over $r$ and using
the identity $\cos(a-b)=\cos(a)\cos(b)+\sin(a)\sin(b)$ yields \eqref{eq:distance-formula-general}. The final statement
follows from Lemma \ref{lem:separations}.
\end{proof}

\section{A one-frequency perturbation}

Fix $\varepsilon\coloneqq1/8$ and define
\[
 c_1\coloneqq1-\varepsilon=\frac78
 \quad\text{and}\quad
 c_j\coloneqq\frac1{j^2}
\]
for odd $j\ge3$. For $r\in[N]$, set
\begin{equation}\label{eq:Wr}
W_r\coloneqq\sum_{\substack{\text{odd }j\ge 1\\ j\equiv \pm r\!\!\!\pmod m}} c_j.
\end{equation}
Then $W_r>0$ for every $r$ (indeed, because $m$ is odd, exactly one of $r$ and
$m-r$ is an odd positive integer congruent to $\pm r \pmod m$, so the sum in
\eqref{eq:Wr} contains at least one positive term). Applying Proposition
\ref{prop:profile-realization} with these weights gives a point set
\[
\mathcal Y_m\coloneqq\{v_t:t\in D\}\subset \mathbb R^{2N}=\mathbb R^{m-1}.
\]

\begin{proposition}\label{prop:discrete-profile}
Let
\begin{equation}\label{eq:H-def}
H(\theta)\coloneqq\sum_{\text{odd }j\ge 1} c_j\,(1-\cos(j\theta)),
\qquad 0\le \theta\le \pi,
\end{equation}
and write
\[
G(\theta)\coloneqq\sqrt{H(\theta)}\qquad (0<\theta\le \pi).
\]
Then the pairwise distances in $\mathcal Y_m$ are exactly
\begin{equation}\label{eq:ds-grid}
 d_s=G(2\pi s/m), \qquad s=1,\dots,N.
\end{equation}
Moreover,
\begin{equation}\label{eq:H-explicit}
H(\theta)=\frac{\pi\theta}{4}-\varepsilon(1-\cos\theta)
\qquad (0\le \theta\le \pi).
\end{equation}
\end{proposition}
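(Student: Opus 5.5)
Two things need to be shown: the grid identity \eqref{eq:ds-grid} and the closed form \eqref{eq:H-explicit}.

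For \eqref{eq:ds-grid}, we start from Proposition \ref{prop:profile-realization}. It gives $d_s^2=\sum_{r=1}^N W_r(1-\cos(2\pi rs/m))$, and we substitute the definition \eqref{eq:Wr} of $W_r$. This turns $d_s^2$ into a double sum over $r\in[N]$ and odd $j\equiv\pm r\pmod m$.

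The key observation is that each odd $j\ge1$ is congruent to $\pm r$ for exactly one $r\in[N]$. Indeed, $j\not\equiv 0\pmod m$ is impossible here only if $m\mid j$; such $j$ are not assigned to any $r$, so we must treat them separately. For $j\equiv \pm r$ we have $\cos(2\pi js/m)=\cos(2\pi rs/m)$, since cosine is even and $2\pi$-periodic. Reindexing therefore gives
\[
d_s^2=\sum_{\substack{\text{odd } j\ge1\\ m\nmid j}} c_j\bigl(1-\cos(2\pi js/m)\bigr).
\]
The terms with $m\mid j$ contribute $c_j(1-\cos(2\pi k s))=0$ for $j=km$, so including them changes nothing. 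Hence $d_s^2=H(2\pi s/m)$ and $d_s=G(2\pi s/m)$, noting that $2\pi s/m\in(0,\pi]$.

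The rearrangement of the double sum is justified by absolute convergence: $\sum c_j<\infty$ and every term is nonnegative, so Tonelli applies. We also need convergence of the series defining $W_r$, which holds for the same reason.

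For \eqref{eq:H-explicit}, we write
\[
H(\theta)=\sum_{\text{odd }j}\frac{1-\cos j\theta}{j^2}-\varepsilon(1-\cos\theta),
\]
using $c_1=1-\varepsilon$. The whole matter then reduces to the classical Fourier identity
\[
\sum_{\text{odd }j\ge1}\frac{1-\cos j\theta}{j^2}=\frac{\pi\theta}{4}\qquad(0\le\theta\le\pi).
\]
To establish it, we first use $\sum_{\text{odd } j}1/j^2=\pi^2/8$, obtained from $\zeta(2)=\pi^2/6$ by removing the even terms. We then use the Fourier expansion
\[
|\theta|=\frac{\pi}{2}-\frac4\pi\sum_{\text{odd }j}\frac{\cos j\theta}{j^2}\qquad\text{on }[-\pi,\pi].
\]
This expansion can be verified by computing the Fourier coefficients of $|\theta|$ directly. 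It converges uniformly because the coefficients are absolutely summable and $|\theta|$ is continuous and periodic, so the series equals the function everywhere.

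Rearranging the expansion gives
\[
\sum_{\text{odd }j}\frac{\cos j\theta}{j^2}=\frac{\pi^2}{8}-\frac{\pi\theta}{4}\qquad\text{for }\theta\in[0,\pi].
\]
Subtracting this from $\pi^2/8$ yields the identity, and hence \eqref{eq:H-explicit}.

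The only step requiring care is the Fourier identity and its pointwise validity, including at the endpoints; everything else is bookkeeping with residues. An alternative that avoids Fourier series goes as follows. Differentiate termwise to get $\sum_{\text{odd}}\sin(j\theta)/j=\pi/4$ on $(0,\pi)$, which is the square-wave series, and then integrate from $0$. With this route, uniform convergence of the undifferentiated series justifies recovering $H$ from the integral.
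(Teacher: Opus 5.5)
Your proposal is correct and follows the paper's proof. The grid identity comes from the same reindexing of $\sum_r W_r(1-\cos(2\pi rs/m))$ over odd $j$, using $\cos(2\pi js/m)=\cos(2\pi rs/m)$ for $j\equiv\pm r$, with the $m\mid j$ terms vanishing; \eqref{eq:H-explicit} is then reduced to the classical identity $\sum_{\text{odd } j}(1-\cos j\theta)/j^2=\pi\theta/4$ on $[0,\pi]$, and the only difference is that you derive this from the triangle-wave expansion of $|\theta|$, while the paper quotes $F(\theta)=\sum_{k\ge1}(1-\cos k\theta)/k^2=\pi\theta/2-\theta^2/4$ on $[0,2\pi]$ and takes the odd part as $F(\theta)-\tfrac14F(2\theta)$, an equivalent standard evaluation.
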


\begin{proof}
If $j\equiv \pm r\pmod m$, then for every integer $s$ we have
\[
\cos\frac{2\pi js}{m}=\cos\frac{2\pi rs}{m}.
\]
Therefore, for each $s\in[N]$,
\[
\sum_{r=1}^N W_r\Bigl(1-\cos\frac{2\pi rs}{m}\Bigr)
=\sum_{\substack{\text{odd }j\ge1\\ j\not\equiv 0\, (\mathrm{mod}\,m)}} c_j\Bigl(1-\cos\frac{2\pi js}{m}\Bigr).
\]
The terms with $j\equiv 0\pmod m$ contribute $0$, so by absolute convergence
this equals
\[
\sum_{\substack{j\ge 1\\ j\text{ odd}}} c_j\Bigl(1-\cos\frac{2\pi js}{m}\Bigr)
=H(2\pi s/m),
\]
which proves \eqref{eq:ds-grid}.
To obtain \eqref{eq:H-explicit}, write
\[
F(\theta)\coloneqq\sum_{k\ge 1}\frac{1-\cos(k\theta)}{k^2}
=\frac{\pi\theta}{2}-\frac{\theta^2}{4}
\qquad (0\le \theta\le 2\pi)
\]
using a standard Fourier series evaluation. The odd part is
\[
\sum_{\text{odd }j\ge 1}\frac{1-\cos(j\theta)}{j^2}
=F(\theta)-\frac14F(2\theta)
=\frac{\pi\theta}{4}
\qquad (0\le \theta\le \pi).
\]
Since the coefficients $c_j$ differ from $1/j^2$ only at $j=1$, equation
\eqref{eq:H-explicit} follows.
\end{proof}

\begin{lemma}\label{lem:monotone-concave}
The function $G$ is strictly increasing and strictly concave on $(0,\pi)$.
\end{lemma}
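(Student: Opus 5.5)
We work directly from the closed form \eqref{eq:H-explicit}, $H(\theta)=\frac{\pi\theta}{4}-\varepsilon(1-\cos\theta)$ with $\varepsilon=1/8$. The plan is to reduce both monotonicity and concavity of $G=\sqrt H$ to elementary sign conditions on $H$, $H'$ and $H''$ on $(0,\pi)$.

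First, positivity. Using $1-\cos\theta\le \theta^2/2$ and $\theta\le\pi$, we get
\[
H(\theta)\ge \frac{\pi\theta}{4}-\frac{\theta^2}{16}\ge\theta\Bigl(\frac{\pi}{4}-\frac{\pi}{16}\Bigr)>0
\]
on $(0,\pi]$. Hence $G$ is well defined and smooth on $(0,\pi)$.

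Next, monotonicity. We have $H'(\theta)=\frac{\pi}{4}-\varepsilon\sin\theta\ge \frac{\pi}{4}-\frac18>0$. Since $G'=H'/(2\sqrt H)$, this gives $G'>0$, so $G$ is strictly increasing.

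Finally, concavity. Differentiating $G'=H'/(2G)$ gives
\[
G''=\frac{2HH''-(H')^2}{4H^{3/2}}.
\]
So it suffices to show $2HH''<(H')^2$ on $(0,\pi)$. Here $H''(\theta)=-\varepsilon\cos\theta$, and there are two cases.
\begin{itemize}
\item On $(0,\pi/2]$, $H''\le0$ while $H>0$, so $2HH''\le 0<(H')^2$.
\item On $(\pi/2,\pi)$, $H''>0$, so a bound is needed. We have $H\le \pi\theta/4\le \pi^2/4$ and $H''\le\varepsilon=1/8$, giving $2HH''\le \pi^2/16\approx0.617$. On the other hand $(H')^2\ge(\pi/4-1/8)^2\approx(0.660)^2\approx0.436$.
\end{itemize}

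The crude estimate in the second case fails, since $0.617>0.436$. This is the main obstacle, and the bounds must be sharpened on $(\pi/2,\pi)$. There $\sin\theta$ and $-\cos\theta$ trade off against each other, so write $\theta=\pi-\phi$ with $\phi\in(0,\pi/2)$. Then
\[
H'=\tfrac{\pi}{4}-\tfrac18\sin\phi,\qquad H''=\tfrac18\cos\phi,\qquad H\le \tfrac{\pi^2}{4}-\tfrac18(1-\cos\phi)\cdot 0+\dots\le \tfrac{\pi^2}{4}.
\]
Note that $1-\cos\theta\ge1$ on this range, so in fact $H\le \frac{\pi^2}{4}-\frac18\approx 2.342$. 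The inequality $2HH''<(H')^2$ then reduces to
\[
\tfrac{2.342}{4}\cos\phi<\bigl(\tfrac{\pi}{4}-\tfrac18\sin\phi\bigr)^2 .
\]

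The subcase split for this one-variable inequality goes as follows.
\begin{itemize}
\item For $\phi$ near $0$ (i.e. $\theta$ near $\pi$), the left side is about $0.585$ and the right side about $0.617$. This is tight but true, using the sharper $H(\pi)=\pi^2/4-1/4\approx2.217$, which gives left side $\approx0.554$.
\item For larger $\phi$, $\cos\phi$ decays faster than $\sin\phi$ grows. A monotonicity or two-interval check, e.g.\ splitting at $\phi=\pi/4$ and using the bounds $\cos\phi\le1$, $\sin\phi\le\phi$ on each piece, should close it.
\end{itemize}
If this margin proves too thin, the fallback is to keep the exact $H(\theta)$ in $2HH''-(H')^2$ rather than bounding $H$ by a constant. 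That leaves an explicit trigonometric expression in $\phi$, to be checked by elementary interval estimates, which is the kind of verification the Lean formalization would handle.
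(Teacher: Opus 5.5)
Your positivity and monotonicity steps are fine. The gap is that the concavity inequality $2HH''<(H')^2$ on $(\pi/2,\pi)$ is never actually proved. With $H\le \pi^2/4-1/8$, your reduced inequality is $K\cos\phi<(\pi/4-\tfrac18\sin\phi)^2$ with $K=\pi^2/16-1/32\approx0.5856$. It happens to be true, but barely: the margin is $1/32$ at $\phi=0$ and shrinks to about $7\times10^{-4}$ near $\phi\approx0.31$. The right side falls linearly (slope $-\pi/16$), while $K\cos\phi$ falls only quadratically. So your heuristic that ``$\cos\phi$ decays faster than $\sin\phi$ grows'' is backwards exactly where it matters. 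The check you suggest cannot close it either. On $[0,\pi/4]$, using $\cos\phi\le1$ and $\sin\phi\le\phi$ would require $K\le(7\pi/32)^2\approx0.472$. That fails for $K\approx0.586$, and also for $(\pi^2-1)/16\approx0.554$. The ``fallback'' of keeping the exact $H$ and doing interval estimates describes work still to be done; it is not an argument.

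The idea you are missing, which the paper uses, is to expand $2HH''-(H')^2$ exactly rather than bounding $H$ and $H''$ separately. With $a=\pi/4$, the $\varepsilon^2$ terms collapse via $2(1-\cos\theta)\cos\theta-\sin^2\theta=-(1-\cos\theta)^2$. This gives $2HH''-(H')^2=-a^2+2a\varepsilon(\sin\theta-\theta\cos\theta)-\varepsilon^2(1-\cos\theta)^2$. Since $\sin\theta-\theta\cos\theta$ has derivative $\theta\sin\theta>0$, it is $<\pi$ on $(0,\pi)$. The choice $\varepsilon=1/8=a/(2\pi)$ makes $2a\varepsilon\pi=a^2$, so the numerator is at most $-\varepsilon^2(1-\cos\theta)^2<0$ on the whole interval, with no case split.

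Your route can also be repaired. You already know $H$ is increasing, so $H<H(\pi)=(\pi^2-1)/4$ on all of $(0,\pi)$, which raises the margin at $\phi=0$ to $1/16$. Then $\sin\phi\le\phi$ and $\cos\phi\le1-\phi^2/2+\phi^4/24$ reduce the claim to positivity of $\tfrac1{16}-\tfrac{\pi}{16}\phi+\bigl(\tfrac1{64}+\tfrac{\pi^2-1}{32}\bigr)\phi^2-\tfrac{\pi^2-1}{384}\phi^4$ on $[0,\pi/2]$. That follows from $\phi^4\le\tfrac{\pi^2}{4}\phi^2$ and a negative discriminant. As written, though, the decisive step is missing.
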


\begin{proof}
Set $a\coloneqq\pi/4$, so that
\[
H(\theta)=a\theta-\varepsilon(1-\cos\theta).
\]
We have
\[
H'(\theta)=a-\varepsilon\sin\theta\ge a-\varepsilon=\frac{\pi}{4}-\frac18>0,
\]
so $H$, and hence $G$, is strictly increasing on $(0,\pi)$.

For concavity we use
\begin{equation}\label{eq:G-second}
G''(\theta)=\frac{2H(\theta)H''(\theta)-H'(\theta)^2}{4H(\theta)^{3/2}}.
\end{equation}
Since $H''(\theta)=-\varepsilon\cos\theta$, a short calculation gives
\begin{equation}\label{eq:numerator-factorized}
2H(\theta)H''(\theta)-H'(\theta)^2
=-a^2+2a\varepsilon\bigl(\sin\theta-\theta\cos\theta\bigr)
-\varepsilon^2(1-\cos\theta)^2.
\end{equation}
Now set
\[
\phi(\theta)\coloneqq\sin\theta-\theta\cos\theta.
\]
Then
\[
\phi'(\theta)=\theta\sin\theta>0
\qquad (0<\theta<\pi),
\]
so $\phi$ is strictly increasing on $(0,\pi)$ and therefore
\[
0<\phi(\theta)<\phi(\pi)=\pi.
\]
Because $\varepsilon=1/8=a/(2\pi)$, equation
\eqref{eq:numerator-factorized} implies
\[
2H(\theta)H''(\theta)-H'(\theta)^2
\le -a^2+2a\varepsilon\pi-\varepsilon^2(1-\cos\theta)^2
=-\varepsilon^2(1-\cos\theta)^2<0
\]
for every $0<\theta<\pi$. By \eqref{eq:G-second}, this proves that $G$ is
strictly concave.
\end{proof}

\begin{corollary}\label{cor:smallest-gap}
The sequence
\[
 d_s=G(2\pi s/m),\qquad s=1,\dots,N,
\]
is strictly increasing and satisfies
\[
 d_{s+1}-d_s > d_{s+2}-d_{s+1}
 \qquad (1\le s\le N-2).
\]
In particular, the smallest gap is $d_N-d_{N-1}$.
\end{corollary}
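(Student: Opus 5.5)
The plan is to read everything off Lemma \ref{lem:monotone-concave} by restricting $G$ to the grid $\theta_s\coloneqq 2\pi s/m$, $s=1,\dots,N$.

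\emph{Range of the grid.} Since $N=(m-1)/2$, we have $\theta_N=\pi(m-1)/m<\pi$. Hence every $\theta_s$ lies in the open interval $(0,\pi)$, where Lemma \ref{lem:monotone-concave} applies.

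\emph{Monotonicity.} By Proposition \ref{prop:discrete-profile}, $d_s=G(\theta_s)$. The grid is strictly increasing in $s$, and $G$ is strictly increasing on $(0,\pi)$, so $d_1<d_2<\dots<d_N$.

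\emph{Decreasing gaps.} For $1\le s\le N-2$, the three grid points $\theta_s,\theta_{s+1},\theta_{s+2}$ all lie in $(0,\pi)$ and are equally spaced, with step $h=2\pi/m$. Strict concavity of $G$ on $(0,\pi)$ gives strict midpoint concavity:
\[
G(\theta_{s+1})>\tfrac12\bigl(G(\theta_s)+G(\theta_{s+2})\bigr),
\]
because $\theta_{s+1}$ is the midpoint of $\theta_s$ and $\theta_{s+2}$. Rearranging gives exactly
\[
d_{s+1}-d_s>d_{s+2}-d_{s+1}.
\]
If one wants to avoid quoting the chord inequality, an alternative is the mean value theorem. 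Write $d_{s+1}-d_s=hG'(\xi_1)$ and $d_{s+2}-d_{s+1}=hG'(\xi_2)$ with $\xi_1<\theta_{s+1}<\xi_2$. Strict concavity, which follows from $G''<0$ as shown in the proof of the lemma, makes $G'$ strictly decreasing, so $G'(\xi_1)>G'(\xi_2)$.

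\emph{Smallest gap.} The gaps $g_s\coloneqq d_{s+1}-d_s$, for $s=1,\dots,N-1$, form a strictly decreasing sequence. Its minimum is therefore the last term $g_{N-1}=d_N-d_{N-1}$. The degenerate cases $N\le 2$ are trivial; in fact $q\ge2$ gives $N\ge3$.

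The only point needing any care is checking that the whole grid, including $\theta_N$, stays strictly inside $(0,\pi)$. This is the first step above: it rests on $m$ being odd, so that $2N=m-1<m$. Everything else is a direct consequence of Lemma \ref{lem:monotone-concave}.
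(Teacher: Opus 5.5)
Your proof is correct and follows the paper's argument: both read monotonicity and the strictly decreasing gaps directly off the strict monotonicity and strict concavity of $G$ on $(0,\pi)$ from Lemma \ref{lem:monotone-concave}, with the paper phrasing the concavity step as ``forward differences along an arithmetic progression strictly decrease.'' Your explicit check that the whole grid $2\pi s/m$, $1\le s\le N$, lies in $(0,\pi)$ because $m$ is odd is a detail the paper leaves implicit.
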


\begin{proof}
By Lemma \ref{lem:monotone-concave}, $d_s$ is strictly increasing.
Since $G$ is strictly concave, its forward differences along any arithmetic 
progression are strictly decreasing, so $d_{s+1}-d_s>d_{s+2}-d_{s+1}$.
\end{proof}

Let
\[
\lambda_m\coloneqq\frac{1}{d_N-d_{N-1}},
\qquad
\mathcal X_m\coloneqq\lambda_m\mathcal Y_m.
\]
By Corollary \ref{cor:smallest-gap}, every two distinct distances in
$\mathcal X_m$ differ by at least $1$. Thus $\mathcal X_m$ satisfies
\eqref{eq:sep-condition}, and
\begin{equation}\label{eq:diam-lambda}
\diam(\mathcal X_m)=\lambda_m d_N=\frac{d_N}{d_N-d_{N-1}}.
\end{equation}

\section{Asymptotic diameter}

We now evaluate \eqref{eq:diam-lambda}.

\begin{proposition}\label{prop:asymptotic-diameter}
Let $m=q^2+q+1$ and $n=q+1$, with $q$ a prime power. Then
\[
\diam(\mathcal X_m)=\left(1-\frac{1}{\pi^2}+o(1)\right)m
=\left(1-\frac{1}{\pi^2}+o(1)\right)n^2.
\]
\end{proposition}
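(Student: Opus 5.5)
We start from \eqref{eq:diam-lambda}, $\diam(\mathcal X_m)=d_N/(d_N-d_{N-1})$ with $d_s=G(2\pi s/m)$ by Proposition \ref{prop:discrete-profile}, and evaluate numerator and denominator separately using the explicit formula \eqref{eq:H-explicit}. Put $h\coloneqq 2\pi/m$ and $\theta_N\coloneqq Nh=\pi(m-1)/m=\pi-\pi/m$. Then $d_N=G(\theta_N)$ and $d_{N-1}=G(\theta_N-h)$.

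For the numerator, continuity of $H$ on $[0,\pi]$ gives $d_N\to G(\pi)=\sqrt{H(\pi)}$. Here $H(\pi)=\pi^2/4-2\varepsilon=\pi^2/4-1/4=(\pi^2-1)/4$, so $G(\pi)=\tfrac12\sqrt{\pi^2-1}$.

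For the denominator, we apply the mean value theorem on $[\theta_N-h,\theta_N]\subset(0,\pi)$. This gives $d_N-d_{N-1}=h\,G'(\xi)$ for some $\xi$ in that interval. Since $\xi\to\pi$ and $G'=H'/(2\sqrt H)$ is continuous near $\pi$ (because $H(\pi)>0$), we get $G'(\xi)\to G'(\pi)=H'(\pi)/(2G(\pi))$. From \eqref{eq:H-explicit}, $H'(\theta)=\pi/4-\varepsilon\sin\theta$, so $H'(\pi)=\pi/4$. Hence
\[
G'(\pi)=\frac{\pi/4}{\sqrt{\pi^2-1}}.
\]

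Combining the two limits,
\[
\diam(\mathcal X_m)=\frac{d_N}{h\,G'(\xi)}=\frac{m}{2\pi}\cdot\frac{G(\pi)+o(1)}{G'(\pi)+o(1)}=\frac{m}{2\pi}\Bigl(\frac{2G(\pi)^2}{H'(\pi)}+o(1)\Bigr).
\]
Now $2G(\pi)^2/H'(\pi)=\frac{(\pi^2-1)/2}{\pi/4}=2(\pi^2-1)/\pi$, so the bracket times $1/(2\pi)$ equals $(\pi^2-1)/\pi^2=1-1/\pi^2$. This yields $\diam(\mathcal X_m)=(1-\pi^{-2}+o(1))m$. Finally, $m=q^2+q+1=n^2-n+1=(1+o(1))n^2$, which gives the second form.

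There is no serious obstacle here. The only point needing care is that the mean value point $\xi$ tends to $\pi$ with $H$ bounded away from $0$ there, so that $G'$ is continuous at the limit. Both facts are immediate from \eqref{eq:H-explicit}.
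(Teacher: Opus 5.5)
Your proposal is correct and follows essentially the same route as the paper. Both apply the mean value theorem to the last gap $d_N-d_{N-1}$ on $[\pi-3\pi/m,\pi-\pi/m]$, pass to the limits $G(\theta_N)\to G(\pi)$ and $G'(\xi)\to H'(\pi)/(2G(\pi))$, and evaluate the ratio from $H(\pi)=(\pi^2-1)/4$ and $H'(\pi)=\pi/4$ to get $1-1/\pi^2$.
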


\begin{proof}
Set
\[
\theta_N\coloneqq\frac{2\pi N}{m}=\pi-\frac{\pi}{m},
\qquad
\theta_{N-1}\coloneqq\frac{2\pi(N-1)}{m}=\pi-\frac{3\pi}{m}.
\]
By the mean value theorem,
\[
 d_N-d_{N-1}=G(\theta_N)-G(\theta_{N-1})
 =\frac{2\pi}{m}G'(\xi_m)
\]
for some $\xi_m\in(\theta_{N-1},\theta_N)$. Since $\xi_m\to\pi$ and
$\theta_N\to\pi$, equation \eqref{eq:diam-lambda} yields
\[
\frac{\diam(\mathcal X_m)}{m}
=\frac{G(\theta_N)}{2\pi G'(\xi_m)}
\longrightarrow \frac{G(\pi)}{2\pi G'(\pi^-)},
\]
where $G'(\pi^-)\coloneqq\lim_{x\to\pi^-}G'(x)$. Now
\[
H(\pi)=\frac{\pi^2}{4}-2\varepsilon=\frac{\pi^2-1}{4},
\qquad
H'(\pi)=\frac{\pi}{4},
\]
so
\[
G'(\pi^-)=\frac{H'(\pi)}{2G(\pi)}=\frac{\pi}{8G(\pi)}.
\]
Therefore
\[
\frac{G(\pi)}{2\pi G'(\pi^-)}
=\frac{4H(\pi)}{\pi^2}
=1-\frac{1}{\pi^2}.
\]
Since $m=n^2-n+1$, the same asymptotic constant holds with $m$ replaced by
$n^2$.
\end{proof}

\begin{proof}[Proof of Theorem \ref{thm:main}]
Take $n=q+1$ with $q$ a prime power and construct $\mathcal X_m$ as above. Then
$\mathcal X_m\subset \mathbb R^{m-1}=\mathbb R^{n^2-n}$ has $n$ points,
satisfies \eqref{eq:sep-condition}, and has diameter bounded as in Proposition%
~\ref{prop:asymptotic-diameter}. Since there are infinitely many prime powers,
this provides infinitely many such values of $n$. The strict inequality
$c_*<1$ contradicts the dimension-free lower bound \eqref{eq:erdos-conjecture}.
\end{proof}

\section{Remarks}

\begin{remark}
Our construction shows that Erd\H{o}s's conjectured lower bound cannot hold
uniformly in the ambient dimension. It remains open whether, for each fixed
\(d\ge2\), every \(n\)-point set in \(\mathbb R^d\) whose pairwise distances
are mutually at least \(1\) apart must satisfy
\[
\diam(\mathcal C)\ge (1+o_d(1))n^2.
\]
\end{remark}

\begin{remark}
More generally, replacing $c_1=7/8$ by $1-\varepsilon$ and keeping
$c_j=1/j^2$ for odd $j\ge3$, the same construction yields
\[
H(\theta)=\frac{\pi\theta}{4}-\varepsilon(1-\cos\theta)
\]
and asymptotic constant $1-8\varepsilon/\pi^2$.
The choice $\varepsilon=1/8$ is a convenient round value at the edge of the
easy concavity argument.
In fact, within this one-frequency family, concavity holds for all
\[
0<\varepsilon\le \varepsilon_*:=\frac{\pi}{16}\bigl(\pi-\sqrt{\pi^2-4}\bigr),
\]
which gives the better constant
\[
1-\frac{8\varepsilon_*}{\pi^2}
=\frac{\pi+\sqrt{\pi^2-4}}{2\pi}
=0.885589\ldots.
\]
We have not attempted to optimize the coefficients $c_j$.
Numerically, optimizing several low odd frequencies suggests that the constant
for this particular construction can be lowered further, to about $0.85411$.
\end{remark}

\section*{Acknowledgements}
GPT-5.4 Pro was used to discover the construction of this paper, and Harmonic Aristotle was used to formalize the proof in Lean 4, with some assistance from GPT-5.4 Pro. 
All mathematical arguments and claims in the final manuscript were independently
verified by the author, who takes full responsibility for the paper.  
The formalization is available from \url{https://github.com/boonsuan/erdos670}.

The author thanks Way Yan Win and Alyxia Seah for helpful comments on a draft of this paper.

\end{document}